\newtheorem{theorem}{Theorem}
\newtheorem{lemma}{Lemma}
\def\IR {\text{\it IR}}
\def\a{{\buildrel {\text{ind}} \over \longrightarrow}}
\begin{document}
\title[A NOTE ON LOWER BOUNDS FOR INDUCED RAMSEY NUMBERS]
{A NOTE ON  LOWER BOUNDS FOR INDUCED RAMSEY NUMBERS}
\author{Izolda Gorgol}
\address{Department of Applied Mathematics\\
    Lublin University of Technology\\
    ul.\ Nadbystrzycka 38D, 20-618 Lublin\\
    Poland}
\email{i.gorgol@pollub.pl}
\keywords{induced Ramsey number}
\subjclass
{05D10, 
 05C55
}
\begin{abstract}
We say that a graph $F$ {\it strongly arrows} a pair of graphs $(G,H)$ and write $F \a (G,H)$ if  any 2-colouring of its edges with red and blue  
leads to  either a red $G$  or a blue $H$ appearing as induced subgraphs of $F$. 
{\it The induced Ramsey number}, $\IR(G,H)$ is defined as $\min\{|V(F)|: F\a (G,H)\}$.
We will consider two aspects of induced Ramsey numbers.
Firstly there will be shown that the lower bound of the induced Ramsey number for
a connected graph $G$ with independence number $\alpha$ and a  graph $H$
with clique number $\omega$  roughly $\frac{\omega^2\alpha}{2}$. This bounds is sharp.
Moreover we discuss also the case when $G$ is not connected providing also a sharp lower bound which is linear in both parameters.
\end{abstract}
\maketitle

\section{INTRODUCTION}

We say that a graph $F$ {\it strongly arrows} a pair of graphs $(G,H)$ and write $F\a (G,H)$ if  any 2-colouring of its edges with red and blue  
leads to  either a red $G$  or a blue $H$ appearing as induced subgraphs of $F$. We call the graph $F$ as a strongly arrowing graph. 
{\it The induced Ramsey number}, $\IR(G,H)$ is defined as $\min\{|V(F)|: F\a (G,H)\}$. It is a generalization of  usual Ramsey numbers $R(G,H)$, where we color the edges of a complete graph and do not require the monochromatic copies to be induced. It is a corollary of the famous theorem of Ramsey that these numbers are always finite.

The existence of the induced Ramsey number is not obvious and it was a subject of an intensive study. Finally it was
 proved independently by
Deuber \cite{deuber},
Erd\H os, Hajnal and P\'osa \cite{erdhapo} and R\"odl \cite{rodl}. Since in case of complete graphs the induced subgraph is the same as the subgraph it is obvious that $\IR(K_m,K_n)=R(K_m,K_n)$. When at least one of the graphs in the pair is not complete these functions differ.

It is not much known about the behaviour of the induced Ramsey numbers.
The results are mostly of asymptotic type and concern upper bounds. It is surely motivated by the fact that these following from the above mentioned proofs are enormous and  Erd\H os conjectured \cite{erdirconj} that there is a positive constant $c$ such  that every graph $G$ with $n$ vertices satisfies $\IR(G,G)\le 2^{cn}$. The most recent result in that direction is that of Conlon, Fox and Sudakov \cite{confoxsud} who showed that $\IR(G,G)\le 2^{cn\log n}$ improving the erliear result of  Kohayakawa, Pr\"omel and R\"odl  $\IR(G,G)\le 2^{cn(\log n)^2}$ \cite{kohprro}.


Moreover these results are generally upper bounds obtained either by probabilistic (\cite{becksize1, haxkohlu, kohprro, luczdeg}) or by constructive methods.  A comparision of results of both types can be found in the paper of Shaefer and Shah \cite{schaefer}. The authors give there the construction of arrowing graphs for a number of pairs of graphs including trees, complete graphs, bipartite graphs and cycles.


As for  the lower bound, it is obvious by the definition 

\begin{equation}
\IR(G,H) \ge R(G,H)
\end{equation}
 and, as far as we know,  it is the only general lower bound known so far.

The main result of this short note is Theorem \ref{dol} which establishes the lower bound for the induced Ramsey number in terms of independence and clique numbers. Although the inductive proof of this theorem is not complicated the result is the first step in that direction. The theorem is somehow similar in spirit to the result of Chv\'atal and Harary \cite{chvatalharary} for Ramsey numbers who observed that for connected $G$

\begin{equation}\label{chi}
R(G,H)\ge (|V(G)|-1)(\chi(H)-1)+1
\end{equation}

To prove \eqref{chi}, consider a 2-edge-coloring of the complete graph on $(|V(G)|-1)(\chi(H)-1)$ vertices consisting of $(\chi(H)-1)$ disjoint red cliques of size $|V(G)|-1$ . This coloring has no red $G$ because all red connected components have size $|V(G)|-1$, and there is no blue $H$ since the partition of this $H$ induced by red cliques would give a coloring of $H$ by $\chi(H)-1$ colors.

For some graphs the bound in \eqref{chi} is quite far from the truth. For example Erd\H os \cite{erdramskn} showed that
which is much larger than the quadratic bound we get from \eqref{chi}. 


This bound is sharp in that sense that for a pair: a star versus a complete graph this lower bound is actually the exact value of the induced Ramsey number. This theorem has its nontrivial application in all cases that the independence (clique) number differs not much from the number of vertices of the graph.  

At the end we mention that 
the only known exact values (not concerning the pairs of small graphs) are for a pair of stars by Harary, Ne\v set\v ril and R\"odl \cite{irn}, matching versus complete graphs by Gorgol and {\L}uczak \cite{gorgolluczak} and for stars versus complete graphs by Gorgol \cite{gorgol}. The two latter will serve as examples of sharpness of our theorems.

\section{Notation}

In this paper we do not introduce any special notation. A graph $G$ is a subgraph of a graph $H$ (denoted by $G\subset H$) if $V(G)\subset V(H)$ and $E(G)\subset E(H)$. A graph $G$ is an induced subgraph of a graph $H$ (denoted by $G\prec H$) if $V(G)\subset V(H)$ and $E(G)=\{uv\in E(H): u,v\in V(G)\}$.
By $F[S]$ we mean a graph induced by a vertex-set $S$. Let $t$ be a positive integer and $F$ be a graph. By a symbol $tF$ we mean a graph consisting of $t$ disjoint copies of the graph $F$. 
For graphs $G$, $H$ 
the symbol $G\cup H$ denotes  a disjoint sum of graphs 
 and $G\setminus H$ denotes a graph obtained from 
$G$ by removing a subgraph $H$ (with all incident edges). The  independence number of a graph $G$, i.e. the size of the largest set of mutually nonadjacent vertices, we denote by  $\alpha(G)$, the clique number, i.e. the size of the largest clique, by $\omega(G)$ and the chromatic number, i.e.  the smallest number of colors needed to color the vertices of $G$ so that no pair of adjacent vertices have the same color by $\chi(G)$. The symbols $P_n$, $C_n$, $K_n$ stand for a path, a cycle and a complete graph on $n$ vertices respectively and $S_k$ for a star with $k$ rays.

\section{Lower bounds for an induced Ramsey number}

As we mentioned in the introduction it is very little known about lower bounds for the induced Ramsey numbers  and a natural lower bound is  the usual Ramsey number. To prove the lower bound for the induced Ramsey number we should show that we can colour every graph $F$ with prescribed number of vertices without induced monochromatic copies of given graphs $(G,H)$. It forces to examine not only the number of vertices of the graph $F$ but also deeply its structure. It occurs that if we consider the independence number of the graph $G$ and the clique number of the graph $H$, as somehow opposite notions, it is not so difficult to  deduce something about the structure of an arrowing graph $F$. Then we can construct an appropriate colorings. The constructions described below  mainly arose from the fact that since $\omega(H)=\omega$ than $K_\omega$ is contained in $F$ and a subgraph and an induced subgraph in case of cliques is the same. Therefore if we avoid a blue $K_\omega$ we avoid an induced $H$ as well. The construcion of colourings avoids connected red graphs with the independence number $\alpha$ simply by taking red subgraph in which the independance number is maximally $\alpha -1$.

\begin{theorem}\label{dol}
Let $G$ be an arbitrary connected graph with $\alpha(G)=\alpha\ge 2$  and $H$ be an arbitrary graph with $\omega(H)= \omega$. Then
$$\IR(G,H)\ \ge \ (\alpha-1)\frac{\omega(\omega-1)}2+\omega.$$
\end{theorem}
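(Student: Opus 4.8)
The plan is to prove the contrapositive: every graph $F$ with $|V(F)|\le(\alpha-1)\binom{\omega}{2}+\omega-1$ admits a red/blue colouring of its edges in which the blue subgraph is $K_\omega$-free and every connected component of the red subgraph has independence number at most $\alpha-1$. This suffices, because $\omega(H)=\omega$ gives $K_\omega\subseteq H$, so the absence of a blue $K_\omega$ already forbids a blue induced $H$; and $G$ is connected with $\alpha(G)=\alpha$, so a red induced $G$ would be an induced subgraph of a single component of the red subgraph, forcing that component to have independence number at least $\alpha$. Throughout write $N(\alpha,\omega)=(\alpha-1)\binom{\omega}{2}+\omega$, and note the identity $N(\alpha,\omega)-N(\alpha,\omega-1)=(\alpha-1)(\omega-1)+1$.

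I would argue by induction on $\omega$, with $\alpha$ fixed. For $\omega=2$ the claim is immediate: then $|V(F)|\le\alpha$, whereas $G$ is connected on $\alpha(G)=\alpha\ge2$ vertices, hence has an edge and at least $\alpha+1$ vertices, so $F$ contains no copy of $G$ at all and colouring every edge red works. For the inductive step, if $\omega(F)\le\omega-1$ colour every edge blue. Otherwise delete from $F$ every vertex lying in no $K_\omega$; the resulting graph $F'$ has the property that each of its vertices lies in a $K_\omega$, and re-inserting the deleted vertices at the end with all their edges blue turns a good colouring of $F'$ into one of $F$ (a blue $K_\omega$ through a re-inserted vertex would be a $K_\omega$ through it, and a new red component is a single vertex). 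If $|V(F')|\le N(\alpha,\omega-1)-1$ we finish by the inductive hypothesis for $\omega-1$, since a $K_{\omega-1}$-free blue subgraph is $K_\omega$-free. So we may assume $\omega(F')\ge\omega$ and $N(\alpha,\omega-1)\le|V(F')|\le N(\alpha,\omega)-1$.

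The heart of the step is to exhibit a set $S\subseteq V(F')$ with $\alpha(F'[S])\le\alpha-1$ and $|S|\ge(\alpha-1)(\omega-1)+1$. One builds $S$ as a union of at most $\alpha-1$ cliques — start from a maximum clique, then repeatedly adjoin a maximum clique of the subgraph induced by the vertices not yet used — so that $\alpha(F'[S])$ grows by at most one at each step; that $|S|$ reaches $(\alpha-1)(\omega-1)+1$ is read off from $|V(F')|\ge N(\alpha,\omega-1)$, from $\omega(F')\ge\omega$, and from a clique cover of $F'$. Given $S$, colour $F'[S]$ entirely red, colour every edge between $S$ and $V(F')\setminus S$ blue, and colour $F'[V(F')\setminus S]$ by the inductive hypothesis for $\omega-1$; this is legitimate since $|V(F')\setminus S|\le N(\alpha,\omega)-1-|S|\le N(\alpha,\omega-1)-1$ by the identity above. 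A blue $K_\omega$ of $F'$ would meet $S$ in at most one vertex (two vertices of $S$ span a red edge), hence leave at least $\omega-1$ vertices forming a blue $K_{\omega-1}$ inside $F'[V(F')\setminus S]$, a contradiction; and since no red edge joins $S$ to $V(F')\setminus S$, every red component of $F'$ lies inside $F'[S]$ or inside the red subgraph of $F'[V(F')\setminus S]$, so has independence number at most $\alpha-1$. Unravelling the induction then gives a good colouring of the original $F$.

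I expect the real difficulty to sit in the existence of the set $S$ when $\omega(F')$ is only moderately large, that is, in the range $\omega\le\omega(F')\le(\alpha-1)(\omega-1)$. If $F'$ contains a clique, or a union of a few disjoint cliques, of total size at least $(\alpha-1)(\omega-1)+1$ there is nothing to prove; otherwise one has to extract those vertices by balancing the lower bound $|V(F')|\ge N(\alpha,\omega-1)$ against the structural fact that every vertex of $F'$ lies in a $K_\omega$. This quantitative trade-off is precisely what pins the bound at $(\alpha-1)\tfrac{\omega(\omega-1)}{2}+\omega$, so that is the step I would prove most carefully; the remaining verifications are the routine ones sketched above.
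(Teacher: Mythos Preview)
Your skeleton matches the paper's: induct on $\omega$, peel off a set $S$ that is a union of at most $\alpha-1$ cliques, colour $F[S]$ red and the crossing edges blue, and recurse on the complement with $\omega$ replaced by $\omega-1$. Where you diverge is in insisting that $S$ must always attain $|S|\ge(\alpha-1)(\omega-1)+1$, flagging this as the hard step, and gesturing at the lower bound $|V(F')|\ge N(\alpha,\omega-1)$ together with ``a clique cover of $F'$'' to justify it. That justification is not actually supplied, and those ingredients do not obviously combine to give the claim; your preliminary reduction to the subgraph $F'$ of vertices lying in some $K_\omega$ does not help either. As written, this is a genuine gap.

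The paper dissolves the difficulty by \emph{not} requiring $S$ to reach the target size. One greedily extracts a $K_\omega$ (call it $K^0$) and then successive disjoint copies of $K_{\omega-1}$ from what remains. If at some stage $i\le\alpha-2$ the remainder $F\setminus(K^0\cup\cdots\cup K^{i-1})$ contains no $K_{\omega-1}$, one simply stops: colour $F[V(K^0)\cup\cdots\cup V(K^{i-1})]$ red and every other edge of $F$ blue. The red part is then a union of $i\le\alpha-2$ cliques, hence has independence number below $\alpha$, and a blue $K_\omega$ would meet the red part in at most one vertex, forcing a $K_{\omega-1}$ in the remainder --- a contradiction. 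If instead the process runs to completion one obtains $K^0,K^1,\dots,K^{\alpha-2}$ with $|S|=\omega+(\alpha-2)(\omega-1)=(\alpha-1)(\omega-1)+1$ exactly, and your inductive step applies verbatim. So the step you identified as the crux is handled by a one-line dichotomy rather than a counting or clique-cover argument, and the detour through $F'$ can be dropped entirely.
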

\begin{proof}

Let $F$ be an arbitrary graph on $(\alpha-1)\frac{\omega(\omega-1)}2+\omega - 1$  vertices. We shall show that
$F$ can be 2-coloured with no red induced $G$ and no blue induced $H$.

The proof will be conducted by induction on $\omega$. It is trivial
for $\omega=2$.
Note that certainly $F$ contains a clique $K_\omega$ otherwise it could be coloured blue.
Let us denote this clique $K^0$ and colour it red. It is easy to observe
that $F\setminus K^0$ must contain a clique $K_{\omega-1}$ otherwise we could colour
the remaining edges of $F$ blue. Denote this clique $K^1$ and colour $F_1=F[V(K^0)\cup V(K^1)]$
 red. Similarly $F\setminus F_1$
must contain a clique $K_{\omega-1}$ which we denote by $K^2$ .
Repeating the above consideration we conclude
that apart from $K^0$ the graph $F$ contains $\alpha-2$ disjoint cliques $K_{\omega-1}$ denoted
by $K^1$, $K^2$, $\dots$, $K^{\alpha-2}$.
Let  all edges of $F[V(\bigcup_{i=0}^{\alpha-2} K^i)]$ be red.
Let $F'=F\setminus\bigcup_{i=0}^{\alpha-2} K^i$. Since $F'$ fulfils the inductive assumption,
 it can be 2-coloured with no red induced $G$ and no blue induced $H$.
Let all not coloured so far edges of $F$ be blue.
In such a colouring there is no red induced $G$ and no blue induced
$H$ and so $\IR(G,H) > (\alpha-1)\frac{\omega(\omega-1)}2+\omega - 1.$

\end{proof}

It follows from the above inductive proof that the strongly arrowing graph $F$ must contain a number of disjoint cliques. Precisely $\bigcup_{j=0}^{\omega-2}(K_{\omega-j}\cup (\alpha-2)K_{\omega-j-1})$ is a subgraph of $F$.

As we mentioned the lower bound from Theorem \ref{dol} is sharp. Gorgol \cite{gorgol} showed the exact value of the induced Ramsey number for stars versus complete graphs.

\begin{theorem}\cite{gorgol}
For arbitrary $k\ge 1$ and $n\ge 2$ holds
$$\IR(S_k,K_n)\ =\ (k-1)\frac{n(n-1)}2+n.$$
\end{theorem}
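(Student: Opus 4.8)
The plan is to pair the lower bound just obtained with a matching construction. \textbf{Lower bound.} For $k\ge 2$ the inequality $\IR(S_k,K_n)\ge (k-1)\frac{n(n-1)}2+n$ is exactly Theorem~\ref{dol} applied to $G=S_k$, $H=K_n$: the star $S_k$ is connected, its $k$ leaves form a maximum independent set so $\alpha(S_k)=k\ge 2$, and $\omega(K_n)=n$. For $k=1$ we have $S_1=K_2$, hence $\IR(S_1,K_n)=\IR(K_2,K_n)=R(K_2,K_n)=n$, which is exactly the claimed value. So it remains to exhibit, for every $k\ge 2$ and $n\ge 2$, a graph $F$ on exactly $N:=(k-1)\frac{n(n-1)}2+n$ vertices with $F\a(S_k,K_n)$.

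\textbf{The construction.} I would build $F=F_n$ by recursion on $n$, mirroring the structure that the proof of Theorem~\ref{dol} forces on any strongly arrowing graph. Take $F_2:=S_k$, which has $k+1$ vertices, the right value of $N$ for $n=2$. For $n\ge 3$ let the vertex set of $F_n$ be the disjoint union of $V(F_{n-1})$, a set $B$ with $|B|=n$, and sets $W_1,\dots,W_{k-2}$ with $|W_i|=n-1$; put $F_n[B]=K_n$, $F_n[W_i]=K_{n-1}$, $F_n[V(F_{n-1})]=F_{n-1}$, and add a carefully chosen set of edges between $V(F_{n-1})$ and $B\cup W_1\cup\dots\cup W_{k-2}$ (and possibly among $B\cup W_1\cup\dots\cup W_{k-2}$). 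A short computation gives $|V(F_n)|=n+(k-2)(n-1)+|V(F_{n-1})|=N$, so the vertex count is tight by construction; note that $F_n$ then contains the subgraph $\bigcup_{j=0}^{n-2}\bigl(K_{n-j}\cup(k-2)K_{n-j-1}\bigr)$ that the remark after Theorem~\ref{dol} forces on any strongly arrowing graph, plus one extra vertex. The only freedom left lies in the cross edges.

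\textbf{Verifying $F_n\a(S_k,K_n)$.} This I would prove by induction on $n$. Base $n=2$: in any $2$-colouring of $S_k$ either all $k$ rays are red, giving a red induced $S_k$, or some ray is blue, giving a blue induced $K_2$. For the step, take a $2$-colouring $\phi$ of $F_n$ and restrict it to the copy of $F_{n-1}$. By the inductive hypothesis this restriction contains a red induced $S_k$ — and since $F_n[V(F_{n-1})]=F_{n-1}$ that copy is still induced and red in $F_n$, so we are done — or a blue induced $K_{n-1}$ on a set $Q\subseteq V(F_{n-1})$. In the latter case each vertex of $B$ and of the $W_i$ that is joined to all of $Q$ is a candidate for extending $Q$ to a blue $K_n$: if one of them has all its edges to $Q$ blue we obtain a blue induced $K_n$. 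Otherwise each such vertex sends at least one red edge into $Q$, and the cross edges are to be chosen precisely so that these forced red edges, together with the clique structure of $B$ and the $W_i$, accumulate $k$ pairwise non-adjacent red neighbours at some common vertex, i.e. a red induced $S_k$.

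\textbf{Where the difficulty lies.} The crux is the design of the cross edges so that the inductive step goes through: one needs ``$\phi$ has no blue $K_n$'' to force red edges that, viewed from a single vertex, land on an \emph{independent} set of size $k$ and thus produce a red induced $S_k$, while simultaneously the new cross edges must not create any unwanted blue clique of size $n$ nor cost an extra vertex. Balancing the independence number $k=\alpha(S_k)$ (which governs the red $S_k$) against the clique number $n=\omega(K_n)$ (which governs the blue $K_n$), and having that balance land on exactly $N$ vertices, is what makes $(S_k,K_n)$ a sharp example for Theorem~\ref{dol}. The remaining ingredients — the base case, the verification of the vertex count, and the inheritance of induced subgraphs from $F_{n-1}$ — are routine.
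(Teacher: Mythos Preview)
The paper does not prove this theorem; it is quoted from \cite{gorgol} solely to certify that the lower bound of Theorem~\ref{dol} is sharp. So there is no ``paper's own proof'' to compare against, and your lower-bound half is exactly what the present paper contributes: Theorem~\ref{dol} applied to $G=S_k$, $H=K_n$.

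The upper bound, however, is not proved in your proposal. You describe a recursive frame $F_n$ built from $F_{n-1}$, a clique $B$ on $n$ vertices, and $k-2$ cliques $W_i$ on $n-1$ vertices, and you check that the vertex count comes out to $N$. But the entire content of the construction lies in the cross edges, and those you never specify: you write ``add a carefully chosen set of edges'', ``the cross edges are to be chosen precisely so that\dots'', and you title the final paragraph ``Where the difficulty lies''. Your inductive verification likewise bottoms out at the unproved assertion that the (unspecified) cross edges force $k$ pairwise non-adjacent red neighbours at a common vertex. That is the whole problem, not a routine detail: one must simultaneously guarantee that every vertex of $B\cup W_1\cup\cdots\cup W_{k-2}$ joined to all of $Q$ sees $Q$ through an \emph{induced} star (so the red edges land on an independent set), and that no blue $K_n$ is created among the new vertices themselves or between them and $F_{n-1}$. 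Until the cross-edge pattern is written down and these two properties are checked, you have a plausible plan but not a proof; the actual construction in \cite{gorgol} is what is needed here.
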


On the other hand if we take a path instead of the star we obtain $\IR(P_t,K_n)\ \geq\ (\lceil \frac t2\rceil-1)\frac{n(n-1)}2+n$. Comparing this with the well known result of  Chv\`atal \cite{chvatal} $R(T,K_n)=(t-1)(n-1)+1 $, where $T$ denotes a tree on $t$ vertices, we obtain a better result for big complete graphs obtaining quadratic instead of a linear bound. The similar can be said if we take cycle instead of path, assuming that the conjecture of Erd\H os ($R(C_t,K_n)=(t-1)(n-1)+1 $) is true. 

Moreover Kohayakawa, Pr\"omel and R\"odl \cite{kohprro} showed that the induced Ramsey number of a tree $T$ and any graph $H$ grows 
polynomially with $|T|=t$ and $|H|=n$

$$ \IR(T,H)\le ct^2n^4\Bigl(\frac{\log (tn^2)}{\log\log\log(tn^2)}\Bigr).$$

Some more effort is needed to proof the analogous lower bound if we allow the graph $G$ not necesserily to be connected. However we assume that it does not contain isolates.
The proof proof is inductive again, but now the first step requires a little bit more attention. 

\begin{lemma} \label{alfa2}
Let $G$ be an arbitrary isolates-free graph with $\alpha(G)= 2$  and $H$ be an arbitrary graph with $\omega(H)\ = \omega$. Then
$$\IR(G,H)\ \ge\ 2\omega.$$
\end{lemma}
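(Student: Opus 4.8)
The plan is to show that an arbitrary graph $F$ on $2\omega - 1$ vertices can be $2$-coloured with no red induced $G$ and no blue induced $H$. As in the proof of Theorem \ref{dol}, the key observation is that, since $\omega(H) = \omega$, avoiding a blue $K_\omega$ automatically avoids a blue induced $H$; and since $\alpha(G) = 2$, any red graph whose complement is triangle-free (equivalently, any red graph with no independent set of size $3$) cannot contain an induced copy of $G$. So the goal is to split $V(F)$ into a red part $R$ and a blue part carrying the remaining edges, with $F[R]$ having independence number at most $2$ while the blue subgraph avoids $K_\omega$.

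First I would argue that $F$ contains a clique $K^0$ on $\omega$ vertices (else colour everything blue and there is no blue $K_\omega$, hence no blue induced $H$). Colour $K^0$ red. Now consider $F' = F \setminus K^0$, which has $\omega - 1$ vertices. The natural next move is to find a clique $K^1$ of size $\omega - 1$ in $F$ disjoint from $K^0$: if $F'$ itself were a clique we would be done painting, but in general we need $F[V(K^0) \cup V(K^1)]$ red with $\alpha \le 2$. The subtlety — and this is the main obstacle — is that unlike in Theorem \ref{dol}, here we have only $2\omega - 1$ vertices total, so after removing $K^0$ there is no room left for an inductive step on $\omega$; instead we must handle the leftover vertices directly. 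The point is that the at most $\omega - 1$ vertices outside $K^0$ must, together with $K^0$, be arranged so that the red graph has no independent triple. If those outside vertices do not already form a clique, some pair among them is non-adjacent; but any independent set in the red graph can use at most one vertex of the clique $K^0$, so it suffices to ensure the vertices outside $K^0$ that we colour into the red part form a clique among themselves and each is red-nonadjacent to at most one... — more carefully, one shows the outside set spans a clique $K_{\omega-1}$ (else, colouring the offending non-edges blue cannot create a blue $K_\omega$ since there are only $2\omega - 1 < 2\omega$ vertices and the blue graph would be confined appropriately), and then $F[V(K^0)\cup V(K^1)]$ coloured red has independence number exactly $2$.

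Concretely, I expect the argument to go: if $F \setminus K^0$ spans a clique, colour all of $F$ red except... no — then $\alpha \le 2$ may fail only if there are three pairwise non-adjacent vertices, impossible once $F\setminus K^0$ is a clique and $K^0$ is a clique, because an independent set meets each in $\le 1$ vertex, giving size $\le 2$; so colour all edges red and we are done. Otherwise $F\setminus K^0$ has a non-adjacent pair $\{u,v\}$; colour the edges within $V(F)\setminus\{u,v\}$... the cleanest route is: pick a maximum clique $K^1$ inside $F\setminus K^0$, colour $F[V(K^0)\cup V(K^1)]$ red (independence number $2$, since it is a union of two cliques), and colour every remaining edge blue. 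A remaining (blue) edge has at least one endpoint among the $\le (\omega-1) - |K^1|$ leftover vertices. Since $|K^1| \ge$ something forced by $\alpha(F\setminus K^0)$, the blue subgraph is too small to contain $K_\omega$: it lives on at most $2\omega - 1 - |K^1|$ vertices plus the crossing edges, and a blue $K_\omega$ would need $\omega$ mutually blue-adjacent vertices, forcing most of them outside the red union, which is impossible by counting. I would therefore expect the heart of the write-up to be this counting estimate pinning down why the blue graph cannot reach $K_\omega$, distinguishing the case where $F\setminus K^0$ is already complete from the case where it is not; the $\alpha = 2$ hypothesis is exactly what makes the two-cliques red part legitimate, and the tight vertex count $2\omega - 1$ is what makes the blue side manageable without induction.

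After Lemma \ref{alfa2} is in hand, the disconnected version of Theorem \ref{dol} for general $\alpha$ would presumably follow by the same inductive scheme on $\omega$ used there, with Lemma \ref{alfa2} (rather than the trivial $\omega = 2$ case) available wherever the connectedness of $G$ was previously invoked to rule out a red induced $G$; but that is the next statement, not this one.
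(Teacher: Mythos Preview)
Your central claim --- that a red subgraph with independence number at most $2$ cannot contain an induced copy of $G$ --- is false, and this breaks the whole plan. Since $\alpha(G)=2$ (not $\ge 3$), a graph with independence number $2$ can perfectly well contain $G$ as an induced subgraph; indeed $G$ itself is such a graph. Concretely, take $G=2K_2$ (isolate-free, $\alpha=2$) and $F=K_\omega\cup K_{\omega-1}$ the disjoint union. Here $F\setminus K^0$ is a clique, so your recipe colours all of $F$ red; but one edge from each component gives a red induced $2K_2=G$. Your later ``cleanest route'' of colouring $F[V(K^0)\cup V(K^1)]$ red fails for the same reason: two cliques give $\alpha\le 2$, not $\alpha\le 1$, and $\alpha\le 1$ is what you would actually need to exclude every $G$ with $\alpha(G)=2$. (This is exactly why Lemma~\ref{alfa2} is harder than the $\omega=2$ base case of Theorem~\ref{dol}: there connectedness of $G$ let the red part have $\alpha\le \alpha(G)-1$; here $G$ may be disconnected and $\alpha(G)-1=1$ would force the red part to be a single clique, which is too restrictive.)

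The paper's argument is quite different from what you sketch. After disposing of the case $\omega(F)>\omega$ (colour a maximum clique red, everything else blue), it finds $K^1=K_\omega$ and observes that colouring $K^1$ red and the rest blue already works unless the remaining $\omega-1$ vertices form a clique $K^2=K_{\omega-1}$ that extends, with some vertices of $K^1$, to a second $K_\omega$. In that residual case the paper does \emph{not} colour a large set red; instead it picks just two or three specific edges to be red (chosen to hit every $K_\omega$ in $F$) and colours everything else blue. The verification that no red induced $G$ appears then reduces to checking that among the at most four endpoints of those red edges, the induced subgraph of $F$ always carries at least one blue edge between the two red edges --- so no isolate-free all-red induced subgraph on $\ge 3$ vertices exists. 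The independence-number bookkeeping you propose plays no role.
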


\begin{proof}
Let $F$ be an arbitrary graph on $2n-1$ vertices. We wil show that it can be coloured without red induced graph with $\alpha = 2$ and blue indyced graph with clique number $\omega$. Obviously $F$ is not complete. Assume that $x$, $y$ are such that $xy\not\in E(F)$. In construction of our coloring we take into account the following fact:


(i) if there exist two independent red edges $xu$ and $yv$, then at least one the edges $uv$, $xv$, $yu$ must exist and be blue.

Note that if $K_t\subset F$ and $t>\omega$ then we color this $K_t$ red, all remaining edges blue and we are done. Hence we assume that there is no $K_{\omega + 1}$ in $F$. On the other hand $F$ must contain $K^1=K_\omega$ otherwise could be coloured blue. Similary like in the proof of Theorem \ref{dol} coloring this $K_\omega$ red we conclude that the remaining $\omega-1$ vertices form a clique $K^2=K_{\omega-1}$.  

If $K^2$ do not form $K_\omega$ with some vertices from $K^1$ then we color $K^1$ red
 and all remaining edges blue.
 
 Thefore assume that there exists $A\subset V(K^1)$ and $C\subset V(K^2)$ such that $F[A\cup B]=K_\omega=K^3$. Let $s=|A|$. Certainly $1\leq s\leq n-1$. Let us take $A$ with maximum  $s$. Let $B=V(K^1)\setminus A$ and $D=V(K^2)\setminus C$.
 
If $s=1$ then we have two cliques $K_\omega$ sharing one vertex, say $a$. Then we color with red two edges: $ab$ for arbitrary $b\in B$ and $c_1c_2$ for $c_1$, $c_2\in C$ and the remaining edges blue. This coloring fulfils (i).

Let now $s\geq 2$. Note that apart from $K^1$ and $K^3$ the graph $F$ may contain at most two more cliques $K_\omega$. There exists at most one $a\in A$ such that $F[B\cup D\cup \{a\}]=K_\omega$ and  at most one $c\in C$ such that $F[B\cup D\cup \{c\}]=K_\omega$. For more then one such vertices we would obtain a larger clique.

We color with red the edges $aa_1$, $ab$, $cd$, where $a_1\in A$, $b\in B$, $d\in D$ are chosen arbitrarily. If any of these additional cliques does not exists, adequate $a$ and $c$ we can also choose arbitrarily. This coloring also fulfils (i).
\end{proof}

\begin{theorem} \label{alfaomega}
Let $G$ be an arbitrary isolates-free graph with $\alpha(G)=\alpha\ge 2$  and $H$ be an arbitrary graph with $\omega(H)\ = \omega$. Then
$$\IR(G,H)\ \ge\ \alpha\omega.$$
\end{theorem}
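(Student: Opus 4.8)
The plan is to run an induction on $\alpha$, using Lemma~\ref{alfa2} as the base case $\alpha = 2$, in close analogy with the inductive structure of Theorem~\ref{dol}. So suppose $\alpha \ge 3$ and let $F$ be an arbitrary graph on $\alpha\omega - 1$ vertices; I must produce a $2$-colouring of $E(F)$ with no red induced $G$ and no blue induced $H$. As in the connected case, since $\omega(H) = \omega$ and cliques are automatically induced, it suffices to avoid a red induced $G$ and a blue $K_\omega$.

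First I would carve off $\omega$ vertices forming a clique: $F$ must contain a $K_\omega$ (else colour everything blue), call it $K^0$, and colour it red. Set $F' = F \setminus K^0$, a graph on $(\alpha-1)\omega - 1$ vertices. By the induction hypothesis $F'$ admits a $2$-colouring with no red induced isolates-free graph of independence number $\alpha - 1$ and no blue $K_\omega$; colour $F'$ this way and colour every remaining edge of $F$ (those incident to $K^0$) blue. The blue graph still has no $K_\omega$: a blue clique would have to live inside $F'$, since every vertex of $K^0$ has only red edges to the rest of $K^0$ and the edges from $K^0$ to $F'$ are blue — wait, that last point is exactly the subtlety. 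A blue clique could use one vertex of $K^0$ together with a blue clique of $F'$. So I must be slightly more careful: colour the $K^0$–$F'$ edges blue, but observe that a blue $K_\omega$ using a vertex $v \in K^0$ would need a blue $K_{\omega-1}$ inside $F'$ all of whose vertices are blue-adjacent to $v$; this need not be excluded by the induction hypothesis alone. The clean fix is to instead route the recursion through the \emph{clique structure} noted after Theorem~\ref{dol}: peel off $K^0 = K_\omega$ and then, as in that proof, extract from $F \setminus K^0$ a further $K_{\omega-1}$ (else the rest can go blue), and continue; but the real issue is that for disconnected $G$ the red component-size obstruction used in Theorem~\ref{dol} is unavailable, which is precisely why $\alpha\omega$ (linear) rather than the quadratic bound is the target.

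Here is the structurally correct recursion. After colouring $K^0$ red and setting $F' = F \setminus K^0$ on $(\alpha-1)\omega - 1$ vertices, apply the induction hypothesis to get a colouring of $F'$ with no blue $K_\omega$ and no red induced isolates-free graph of independence number $\le \alpha - 1$. Now colour the $K^0$–$F'$ edges: for each vertex $v$ of $F'$, if $v$ sends at least one red edge into $K^0$ in a way that would extend a red structure, we have flexibility — but the key observation is that the red graph of $F$ is the disjoint union (as far as induced subgraphs go) of the red clique $K^0$ and the red graph on $F'$, \emph{provided we colour all $K^0$–$F'$ edges blue}. Then any red induced subgraph of $F$ is either contained in $F[V(K^0)]$ — a clique, with independence number $1 < \alpha$ — or contained in $F'$ — independence number $\le \alpha - 1 < \alpha$ — or splits across both with no edges between the parts, hence has an isolated vertex on the $K^0$ side unless it uses $\ge 2$ vertices there giving a red clique component, but combined with any red vertex from $F'$ it still has independence number at most $1 + (\alpha - 1) = \alpha$, which is a problem. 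So the genuinely delicate point, and the one I expect to be the main obstacle, is ruling out a red induced copy of $G$ that is split between $K^0$ and $F'$: since $G$ has no isolates, $G$ cannot be realised with a component entirely inside the (edgeless-to-outside) clique $K^0$ unless that component is itself a clique $K_j$ with $j \le \omega$, and then the remaining part of $G$ must be an isolates-free induced subgraph of the red graph on $F'$ with independence number $\ge \alpha - (\text{independence number of }K_j) = \alpha - 1$; invoking the induction hypothesis on $F'$ for this residual graph (which has independence number exactly $\alpha-1$ and no isolates) gives the contradiction. I would need to check that every way of writing $G$ as a disjoint union $K_j \cup G'$ with $G'$ isolates-free forces $\alpha(G') \ge \alpha - 1$, and handle the blue $K_\omega$ count by the same split analysis — a blue $K_\omega$ meeting $K^0$ in $\ge 1$ vertex meets $F'$ in $\le \omega - 1$ vertices forming a blue clique there, which is allowed, so in fact I must colour the $K^0$–$F'$ edges \emph{red} for vertices that create no red $G$ and blue otherwise, balancing the two constraints. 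Resolving this tension between "blue edges from $K^0$ help avoid blue $K_\omega$" and "red edges from $K^0$ help avoid a split red $G$" is the crux; the base case Lemma~\ref{alfa2}, with its condition (i), is the template showing how such a balance is struck, and I expect the inductive step to reduce — after the split analysis — to exactly an application of the induction hypothesis plus the isolates-free hypothesis on $G$.
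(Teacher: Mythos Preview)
Your inductive skeleton matches the paper's exactly: induct on $\alpha$ with Lemma~\ref{alfa2} as the base, peel off a copy $K^0$ of $K_\omega$ and colour it red, then recurse on $F'=F\setminus K^0$, which has $(\alpha-1)\omega-1$ vertices. The gap is the one decision you never settle: how to colour the $K^0$--$F'$ cross edges. You try blue, correctly notice that a blue $K_\omega$ can then use one vertex of $K^0$ together with a blue $K_{\omega-1}$ in $F'$, and from there drift into a proposed mixed red/blue scheme that you do not carry out. The paper's move is simply to colour \emph{all} remaining edges of $F$---in particular all cross edges---\emph{red}.

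You also have the trade-off reversed. It is red cross edges, not blue, that kill blue $K_\omega$'s straddling the cut: once every edge incident to $K^0$ is red, no blue clique can touch $K^0$, so any blue $K_\omega$ lives entirely inside $F'$ and is excluded by the inductive colouring. There is no ``tension'' to balance. On the red side, a red induced copy of $G$ on $S=S_0\cup S'$ (with $S_0\subseteq V(K^0)$, $S'\subseteq V(F')$) has $F[S_0]$ a clique and $F[S']$ a red induced subgraph of $F'$; since any independent set in $F[S]$ meets the clique $S_0$ in at most one vertex, $\alpha(G)\le 1+\alpha(F[S'])$, and the inductive colouring of $F'$ is taken to exclude red induced pieces of independence number $\alpha-1$. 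That is the whole argument; your detour through component splits and the ``isolates on the $K^0$ side'' analysis is an artefact of having chosen the wrong colour for the cross edges.
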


\begin{proof}

Let $F$ be  an arbitrary graph
on~$\alpha\omega-1$ vertices.
We use the induction on $\alpha$ to prove 
 that $F$ can be 2-coloured with no red induced $G$
and no blue induced $H$.

The assertion   for $\alpha=2$ follows from Lemma \ref{alfa2}.
Thus, let $\alpha>2$. 
We may assume that $G$ contains a clique $K_\omega$;   
otherwise we  colour all edges of $F$  blue.
Colour this clique red.
A graph induced by the remaining vertices fulfills the 
inductive assumption so it can be coloured with no red induced graph with independence number
$(\alpha-1)$ and no blue induced $H$.
Now,  colour red all edges of~$F$ which have not been 
 coloured so far.
\end{proof}

It is worth to notice that if we allow the graph $G$ not to be connected this lower bound is sharp. Gorgol and {\L}uczak \cite{gorgolluczak} showed the exact value of the induced Ramsey number for a matching and a complete graph.

\begin{theorem} \cite{gorgolluczak} \label{skojind}
For arbitrary  $k\ge 1$ and~$n\ge 2$
$$\IR(kK_2,K_n)\ =\ kn.$$
\end{theorem}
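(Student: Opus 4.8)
The claimed equality is the combination of a matching lower and upper bound. The lower bound is immediate from what precedes: $kK_2$ is isolates-free and an independent set meets each of its $k$ edges in at most one vertex, so $\alpha(kK_2)=k$, while $\omega(K_n)=n$; hence for $k\ge 2$ Theorem~\ref{alfaomega} yields $\IR(kK_2,K_n)\ge \alpha\omega=kn$, and for $k=1$ this is just the identity $\IR(K_2,K_n)=R(K_2,K_n)=n$.

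For the upper bound I would exhibit a strongly arrowing graph on exactly $kn$ vertices, the obvious candidate being $F=kK_n$. To check $F\a(kK_2,K_n)$, fix a red/blue colouring of $E(F)$. If all the edges inside one of the $k$ copies of $K_n$ are blue, then the vertex set of that copy induces a blue $K_n$ in $F$, since the copy is a clique and is disconnected from the rest of $F$. Otherwise every copy contains a red edge; picking one such edge $e_i=u_iv_i$ from the $i$th copy, the set $S=\{u_1,v_1,\dots,u_k,v_k\}$ has exactly two vertices in each of $k$ distinct components, so $F[S]\cong kK_2$ and all of its edges, namely the $e_i$, are red. Hence $F\a(kK_2,K_n)$ and $\IR(kK_2,K_n)\le kn$.

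There is essentially no obstacle here; the only point that needs attention is the word \emph{induced}. Any copy of $K_n$ inside $kK_n$ must lie within a single component, where the induced subgraph is automatically complete, so the blue $K_n$ is genuinely induced; and the red $kK_2$ is induced because exactly two vertices are taken from each of the $k$ relevant components, so no spurious edges appear. That $F=kK_n$ should be extremal is strongly suggested by the remark following Theorem~\ref{dol}, which already forces every strongly arrowing graph to contain a disjoint union of cliques of precisely the relevant sizes.
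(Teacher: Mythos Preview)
Your argument is correct. Note, however, that the paper does not give its own proof of this theorem: it is quoted from \cite{gorgolluczak} solely to witness that the lower bound of Theorem~\ref{alfaomega} is sharp. So there is no ``paper's proof'' to compare against here.

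That said, your write-up is exactly in the spirit of the paper's presentation. The lower bound you invoke is precisely Theorem~\ref{alfaomega} (with the $k=1$ case handled separately, as it must be since $\alpha(K_2)=1$), and your upper-bound witness $F=kK_n$ is the natural one: each component either is entirely blue, giving an induced blue $K_n$, or contributes a red edge, and the $k$ chosen edges span an induced red $kK_2$ because distinct components are nonadjacent. The verification that both monochromatic copies are \emph{induced} is the only subtlety, and you address it correctly.
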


\bibliographystyle{amsplain}



\end{document}